\newtheorem{thm}{Theorem}
\newtheorem{lem}[thm]{Lemma}
\newtheorem{proposition}[thm]{Proposition}
\newdefinition{rmk}{Remark}
\journal{arXiv}
\begin{document}

\begin{frontmatter}



\title{Efficient Counting of Degree Sequences}


\author{Kai Wang\corref{corkw}}
\ead{kwang@georgiasouthern.edu}
\address{Department of Computer Sciences, Georgia Southern University, Statesboro, GA, USA, 30458}

\cortext[corkw]{Corresponding author}


\begin{abstract}
Novel dynamic programming algorithms to count the set $D(n)$ of zero-free degree sequences of length $n$,
the set $D_c(n)$ of degree sequences of connected graphs on $n$ vertices and the set $D_b(n)$ of degree sequences
of biconnected graphs on $n$ vertices exactly are presented.
They are all based on a recurrence of Barnes and Savage and shown to run in polynomial time
and are asymptotically much faster than the previous best known algorithms for these problems.
These appear to be the first polynomial time algorithms to compute $|D(n)|$, $|D_c(n)|$ and $|D_b(n)|$
to the author's knowledge and have enabled us to
tabulate them up to $n=118$, the majority of which were unknown. The available numerical results of $|D(n)|$
tend to give more supporting evidence of a conjecture of Gordon F. Royle about the limit
of $|D(n)|/|D(n-1)|$.
The OEIS entries that can be computed by algorithms in this paper are A004251, A007721, A007722 and A095268.
\end{abstract}

\begin{keyword}
dynamic programming \sep degree sequence \sep graphical partition \sep connected \sep biconnected \sep recurrence \sep counting \sep enumeration


\end{keyword}

\end{frontmatter}



\section{Introduction}
\label{sec:Intro}
The method to count the number of unlabeled graphs, unlabeled connected graphs and unlabeled biconnected graphs
on $n$ vertices exactly based on the Redfield-P\'{o}lya
enumeration theorem is well-known and proves to be efficient \cite{Redfield1927,Polya1937,Robinson1970,HararyPalmer1973}.
However, there was no known efficient method to count the number
of degree sequences of these graphs exactly. We present a solution to these problems in this paper. Only simple
(i.e. with no loops or multiple edges) undirected graphs and their degree sequences are considered.

A degree sequence of length $n$ is a sequence of nonnegative integers $d_1, d_2, \ldots, d_n$ (in which order is
irrelevant so that they are usually written in non-increasing order without loss of generality)
such that there is a simple undirected graph $G$ of $n$ vertices
with the vertex degrees $d_1, d_2, \ldots, d_n$.
A zero-free degree sequence is a degree sequence such that each term of the sequence is positive, or equivalently,
a degree sequence of a simple undirected graph with no isolated vertices. For convenience, from now on
by degree sequences we mean \textit{zero-free} degree sequences unless otherwise noted. Let $D(n)$ denote the set
of degree sequences of length $n$. A positive integer sequence $a_1, a_2, \ldots, a_n$ (in which order is also irrelevant)
with sum $N=\sum_{i=1}^{n}a_i$ is called a graphical partition of $N$ with $n$
parts, or a graphic sequence, if it is the degree sequence of a simple undirected graph.
Note that $N$ must be even to have a graphical partition, since the sum of the vertex degrees of a simple
undirected graph is twice the number of its edges and therefore must be even. Sierksma and Hoogeveen \cite{Sierksma1991}
showed seven criteria to be equivalent to characterize graphic integer sequences, of which the
Erd{\H{o}}s-Gallai criterion \cite{ErdosCallai1960} is probably the most widely known.

The problems of counting or enumerating the set $G(N)$ of graphical partitions of a fixed positive even integer $N$ and the set
$D(n)$ for a fixed length $n$ have been extensively studied, for example \cite{Erdos1993,Ruskey1994,BarnesSavage1995,
BarnesSavage1997,Pittel1999,Kohnert2004,Burns2007,Ivanyi2013,Pittel2016}. Pittel \cite{Pittel1999} affirmatively resolved
a problem originally posed in 1982 by Herbert S. Wilf whether the fraction of
graphical partitions in the set $P(N)$ of all partitions of a positive integer $N$ goes to 0 as $N$
approaches infinity. Erd{\H{o}}s and Richmond \cite{Erdos1993} showed that this fraction as a function of $N$
cannot tend to 0 faster than $\pi/\sqrt{6N}$. Recently Pittel \cite{Pittel2016} has obtained an upper bound
of $\exp({-0.11\log N/\log \log N})$ for $|G(N)|/|P(N)|$
with $N$ sufficiently large using a local limit theorem. Comparison between the bound and actual values
of $|G(N)|/|P(N)|$ up to $N=910$ suggests that the bound does not appear to be tight. Up till now, the
asymptotic order of $|G(N)|$ is still unknown, though the order of $|P(N)|$ has been known to be $|P(N)|\sim \frac{\exp(\pi\sqrt{2N/3})}{4\sqrt{3}N}$
since 1918 by Hardy and Ramanujan \cite{Hardy1918}.

For the asymptotic growth rate of $|D(n)|$, good upper and lower bounds are known. Burns \cite{Burns2007} showed there
are positive constants $c_1$ and $c_2$ such that $4^n/(c_1 n) \le |D(n)| \le 4^n/((\log n)^{c_2} \sqrt{n})$ for all sufficiently
large $n$. Ruskey \textit{et al.} \cite{Ruskey1994} presented a fast algorithm to enumerate degree sequences of length $n$ using
the ``Reverse Search'' approach originated by Avis and Fukuda \cite{Avis1996}. The algorithm appears to run in constant
amortized time (CAT), though no proof has been given yet to the knowledge of the author. No matter whether
it is truly CAT or not, it is clear that its time complexity is at least $\Omega(4^n/n)$ based on the lower bound of $|D(n)|$ as proved
in \cite{Burns2007}. Exact values of $|D(n)|$ up to $n=31$ were obtained in
Iv\'{a}nyi \textit{et al.} \cite{Ivanyi2013} by testing every sequence
in the set $E(n)$ of even bounded sequences of length $n$ (that is, a non-increasing positive integer sequence of length $n$
with even sum and largest term less than $n$) for its graphicness. A sequence in $E(n)$
can be tested whether it is graphic in linear time and $|E(n)|$ is asymptotically
$4^n/(8\sqrt{\pi n})$ \cite{Ivanyi2013}. Therefore, this filtering method of calculating $|D(n)|$ has an asymptotic time complexity of $\Theta(\sqrt{n} 4^n)$
and space complexity of $O(n)$. By our own testing, the method in \cite{Ruskey1994} appears to
be faster than the filtering method in \cite{Ivanyi2013} if both are restricted to run in single-threaded mode,
which would make sense because the``Reverse Search'' based approach would have a time complexity of $O(4^n/((\log n)^{c_2} \sqrt{n}))$
under the assumption that it runs in CAT.

The OEIS entry \textcolor{red}{\href{https://oeis.org/A095268}{A095268}}
records the values of $|D(n)|$. 
In 2006 Gordon F. Royle posed the question whether $|D(n)|/|D(n-1)|$ tend to the limit 4 when $n$ approaches infinity
\cite{OEISA095268} as there seems to be no apparent reason why $|D(n)|/|D(n-1)|$ would tend to the same limit as $|E(n)|/|E(n-1)|$.
In this paper we will present two fast novel algorithms to calculate $|D(n)|$ based on several recurrences,
circumventing the need to generate all degree sequences with a given length as performed in \cite{Ruskey1994,Ivanyi2013}.
We will show that both of these algorithms run in polynomial time, which appear to be the first such algorithms for this problem and
are asymptotically faster than previous exponential algorithms shown above, at the expense of larger space requirement.
The numerical results of $|D(n)|$ so obtained up to $n=118$ tend to agree with the conjecture posed by Royle.
Similar techniques are utilized to design fast polynomial time algorithms to count the number of degree sequences of connected
and biconnected graphs on $n$ vertices. The only known previous algorithms to count them seem to be first generating
all the degree sequences in $D(n)$ and then counting a subset satisfying the relevant connected and biconnected conditions, which make them all
run in exponential time. A related problem of counting the number of degree sequences with fixed degree sum (instead of fixed length) of connected and biconnected
\textit{multigraphs} that might contain multiple edges is considered by R{\o}dseth \textit{et al.} \cite{Rodseth2009}.

We note that the degree sequences considered in this paper are unordered. For example, the degree sequences $211$, $121$
and $112$ of the path graph with three vertices are all treated as the same. When regarded as ordered degree sequences, they
are considered distinct. Ordered partitions are called compositions in the literature \cite{Andrews1984}. The efficient
computation of the number $f(n)$ of ordered degree sequences of length $n$ allowing zero terms
(the OEIS entry \textcolor{red}{\href{https://oeis.org/A005155}{A005155}}) has been solved by
Stanley \cite{Stanley1991} using the theory of zonotopes. For example, the exact value of $f(200)$ is about
$9.97\times 10^{458}$. The asymptotic order of $f(n)$ is shown on its OEIS entry by Vaclav Kotesovec 
to be $\frac{\Gamma(3/4)}{2^{3/4}\sqrt{e \pi}}n^{n-1/4}(1-\frac{11\pi}{24\sqrt{n}(\Gamma(3/4))^2})$, which is also asymptotic
to $\frac{\Gamma(3/4)}{2^{3/4}\sqrt{e \pi}}n^{n-1/4}$. By contrast, the tight asymptotic order of $|D(n)|$ is unknown
and we are unable to resolve it at this time.

The rest of the paper is organized as follows. In Section \ref{sec:Algo}, we formalize the necessary definitions,
derive the relevant formulae and describe the two algorithms to compute $|D(n)|$. 
In Section \ref{sec:count_connected} we present algorithms based on similar techniques to compute the number of degree sequences
of connected graphs on $n$ vertices. In Section \ref{sec:count_biconnected} we present an algorithm based on similar dynamic
programming techniques and additional combinatorial observations to compute the number of degree sequences
of biconnected graphs on $n$ vertices. In Section \ref{sec:Analysis} we analyze the computational complexity of the algorithms.
In Section \ref{sec:experiments} we give some brief computational results that serve as proof of concept.
In Section \ref{sec:conclusion} we conclude and suggest further research directions.

\section{Description of the Two Algorithms for $|D(n)|$}
\label{sec:Algo}
In this section we first review the definitions needed to describe the two algorithms to calculate $|D(n)|$.
We then present the relevant formulae for $|D(n)|$ and the pseudo-codes of our algorithms.
For the reader's convenience, the terminology employed in this paper is summarized in Table \ref{tbl:definitions}.
\begin{table}[!htb]
	\centering
	\caption{Terminology used in this paper}
	\begin{tabular}{||c|l||}
		\hline\hline
		Term & Meaning\\
		\hline\hline
		$P(N)$ & set of all partitions of an integer $N$\\
		\hline
		$P(N,k,l)$ & set of partitions of an integer $N$ into at most $l$ parts\\
				   & with largest part at most $k$ \\
		\hline
		$P(N,k,l,s)$ & subset of $P(N,k,l)$ determined by integer $s$ \\
		\hline
		$P^{'}(N,k,l)$ & set of partitions in $P(N,k,l)$ with exactly $l$ parts\\
						& and largest part exactly $k$\\
		\hline
		$P^{'}(N,k,l,s)$ & set of partitions in $P(N,k,l,s)$ with exactly $l$ parts\\
						& and largest part exactly $k$\\
		\hline
		$G(N)$ & set of all graphical partitions of an even integer $N$\\
		\hline
		$G^{'}(N,k,l)$ & set of graphical partitions in $P^{'}(N,k,l)$ \\
		\hline
		$G^{'}(N,l)$ & set of graphical partitions of $N$ with exactly $l$ parts\\
		\hline
		$H^{'}(N,l)$ & set of graphical partitions of $N$ with exactly $l$ parts\\
		& and largest part exactly $l-1$\\
		\hline
		$L^{'}(N,l)$ & set of graphical partitions of $N$ with exactly $l$ parts\\
		& and largest part less than $l-1$\\
		\hline\hline
		$d(\pi)$ & size of the Durfee square of the partition $\pi$ \\
		\hline
		$r(\pi)$ & corank vector of the partition $\pi$ \\
		\hline\hline
		$D(n)$ & set of (zero-free) degree sequences of length $n$ \\
		\hline
		$B(n)$ & subset of $D(n)$ with largest part exactly $n-1$ and\\
		& smallest part exactly $1$\\
		\hline
		$C(n)$ & subset of $D(n)$ with smallest part exactly $1$ \\
		\hline
		$D_0(n)$ & set of degree sequences of length $n$ allowing zero terms \\
		\hline
		$D_b(n)$ & set of degree sequences of biconnected graphs on $n$ vertices \\
		\hline
		$D_c(n)$ & set of degree sequences of connected graphs on $n$ vertices \\
		\hline
		$D_d(n)$ & subset of $D(n)$ that are forcibly disconnected \\
		\hline
		$E(n)$ & set of even bounded sequences of length $n$ \\
		\hline
		$f(n)$ & number of ordered degree sequences of length $n$ \\
		& allowing zero terms\\
		\hline
		$H(n)$ & subset of $D(n)$ with largest part exactly $n-1$ \\
		\hline
		$L(n)$ & subset of $D(n)$ with largest part less than $n-1$ \\
		\hline
		$S(n)$ & subset of $D(n)$ with largest part exactly $n-2$ \\
		\hline
		$W(n)$ & subset of $D(n)$ with largest part less than $n-1$ and\\
		& smallest part exactly 1 \\
		\hline
		$I_e(n)$&$\{N|n\le N\le n(n-1), N \mbox{ is an even integer}\}$\\
		\hline
		$I'_e(n)$&$\{N|2(n-1)\le N\le n(n-1), N \mbox{ is an even integer}\}$\\
		\hline
		$I''_e(n)$&	$\{N|2n-3\le N\le n(n-2), N \mbox{ is an even integer}\}$\\
		\hline
		$J_e(n)$&$\{N|n\le N\le n(n-2), N \mbox{ is an even integer}\}$\\
		\hline
		$J'_e(n)$&$\{N|n\le N< n(n-1)/2, N \mbox{ is an even integer}\}$\\
		\hline\hline
	\end{tabular}
	\label{tbl:definitions}
\end{table}

\subsection{Definitions and the Basic Algorithm}
\label{subsec:BasicAlgo}
The notion of \textit{partitions} in number theory is standard and can be found in many textbooks, for example \cite{Hardy2008,Andrews1984}.
A partition of an integer is a way of writing the integer as a sum of positive integers in which the order
of the summands (also called parts) is irrelevant. The parts in a partition are usually written
in non-increasing order, for example, $8=4+3+1$.
A partition can be visually represented by its Ferrers diagram as a left-aligned array of dots so that the
number of dots in the $i$-th row of the array is equal to the $i$-th part of the partition. The Durfee square
is the upper-left largest square that is embedded within a partition's Ferrers diagram. Figure \ref{fig:part8}
shows the Ferrers diagram of the example partition $8=4+3+1$. This partition has three parts, the largest part
is 4, and its Durfee square contains two rows and two columns.

\begin{figure}[!htbp]
	\centering
	\includegraphics[width=4cm]{./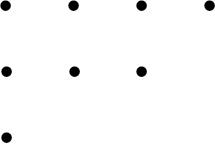}\\
	\caption{Ferrers diagram of the partition 8=4+3+1. Its Durfee square size is 2.}
	\label{fig:part8}
\end{figure}

Following the notation of Barnes and Savage \cite{BarnesSavage1995}, let $P(N,k,l)$ denote the set of partitions
of an integer $N$ into at most $l$ parts with largest part at most $k$, and for integer $s$, define $P(N,k,l,s)$ by
\[ P(N,k,l,s)=\left\{ \begin{array}{ll}
\emptyset & \mbox{if $s < 0$};\\
\{\pi \in P(N,k,l) | s+\sum_{i=1}^{j}r_i(\pi)\ge j, 1 \le j \le d(\pi)\} & \mbox{if $s \ge 0$},\end{array} \right.\]
where $d(\pi)$ is the size (number of rows) of the Durfee square of the partition $\pi$ and 
$r(\pi)=(r_1(\pi),\ldots,r_{d(\pi)}(\pi))$ is the corank vector of $\pi$ with $r_i(\pi)=\pi_i^{'}-\pi_i$ and
$\pi_i$ and $\pi_i^{'}$ are the number of dots in the $i$-th row and column of the Ferrers diagram of $\pi$,
respectively, for $1 \le i \le d(\pi)$.
The example partition $8=4+3+1$ in Figure \ref{fig:part8} has the corank vector $(-1,-1)$.

Additionally, as in \cite{BarnesSavage1995}, define $P^{'}(N,k,l)$ and $P^{'}(N,k,l,s)$
to be the subsets of $P(N,k,l)$ and $P(N,k,l,s)$, respectively, consisting of those partitions
with \textit{exactly} $l$ parts and largest part \textit{exactly} $k$.
Define $G^{'}(N,k,l)$ to be the set of graphical partitions in $P^{'}(N,k,l)$. Then we have
the following lemma that establishes a relationship between a set of graphical partitions
and a set of partitions.
\begin{lem}
\label{lem_graphical_exact}
For even integer $N \ge 0$, $G^{'}(N,k,l)=P^{'}(N,k,l,0)$.
\end{lem}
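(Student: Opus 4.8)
The plan is to read the equality off the Erd\H{o}s--Gallai criterion \cite{ErdosCallai1960}, which states that a partition $\pi=(\pi_1\ge\pi_2\ge\cdots\ge\pi_l)$ with even sum $N$ is graphical if and only if
\[ f(j):=\sum_{i=1}^{j}\pi_i-j(j-1)-\sum_{i>j}\min(\pi_i,j)\le 0,\qquad 1\le j\le l. \]
Both $G'(N,k,l)$ and $P'(N,k,l,0)$ consist of partitions of the even integer $N$ drawn from $P'(N,k,l)$, so they already agree on having exactly $l$ parts and largest part exactly $k$; hence it suffices to show that, for such a $\pi$, the family of Erd\H{o}s--Gallai inequalities above is equivalent to the defining condition of the $s=0$ case, namely $\sum_{i=1}^{j}r_i(\pi)\ge j$ for $1\le j\le d(\pi)$.

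The first and main step is to cut the range of tested indices down from $1\le j\le l$ to $1\le j\le d(\pi)$, i.e. to show that the inequalities beyond the Durfee square are redundant. The key input is that $\pi_{j+1}\le d(\pi)\le j$ whenever $j\ge d(\pi)$, directly from the definition of the Durfee square. Under this hypothesis $\min(\pi_{j+1},j)=\pi_{j+1}$, and a short telescoping computation of $f(j)-f(j+1)$ (splitting off the term $\min(\pi_{j+1},j)$ and comparing $\min(\pi_i,j)$ with $\min(\pi_i,j+1)$ for $i>j+1$) gives
\[ f(j)-f(j+1)=2j-2\pi_{j+1}+A_j\ge 0,\qquad A_j:=\bigl|\{\,i>j+1:\pi_i\ge j+1\,\}\bigr|\ge 0. \]
Thus $f$ is non-increasing on $j\ge d(\pi)$, so $f(j)\le f(d(\pi))$ there, and the full system holds exactly when $f(j)\le 0$ for all $1\le j\le d(\pi)$. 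I expect this reduction to be the real obstacle: the delicate points are the case analysis hidden inside $\min(\pi_{j+1},j)$ (resolved by $j\ge d(\pi)$) and bookkeeping the nonnegative correction $A_j$; the remaining steps are routine.

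The second step rewrites each surviving inequality through the conjugate partition. Counting the cells of the first $j$ columns by rows versus by columns gives $\sum_{i}\min(\pi_i,j)=\sum_{c=1}^{j}\pi_c'$, while for $j\le d(\pi)$ each of the first $j$ rows has length at least $d(\pi)\ge j$, so $\sum_{i\le j}\min(\pi_i,j)=j^2$; subtracting yields $\sum_{i>j}\min(\pi_i,j)=\sum_{c=1}^{j}\pi_c'-j^2$. Substituting this into $f(j)\le 0$ and using $j(j-1)=j^2-j$ collapses the inequality to $\sum_{i=1}^{j}(\pi_i'-\pi_i)\ge j$, that is $\sum_{i=1}^{j}r_i(\pi)\ge j$. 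Combined with the reduction of the previous paragraph, this shows that a partition $\pi\in P'(N,k,l)$ is graphical precisely when $\sum_{i=1}^{j}r_i(\pi)\ge j$ for $1\le j\le d(\pi)$, which is exactly the condition defining $P'(N,k,l,s)$ at $s=0$; hence $G'(N,k,l)=P'(N,k,l,0)$, as claimed.
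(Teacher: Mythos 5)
Your proof is correct, but it takes a genuinely different route from the paper. The paper's proof is essentially a one-liner: it unfolds the two definitions and then directly invokes the Nash--Williams condition (cited from the literature), which states that a partition of an even integer is graphical if and only if $\sum_{i=1}^{j}r_i(\pi)\ge j$ for $1\le j\le d(\pi)$ --- exactly the defining condition of $P'(N,k,l,0)$. You instead start from the Erd\H{o}s--Gallai criterion and \emph{derive} the Nash--Williams condition from it: first showing that the inequalities indexed beyond the Durfee square are redundant (via the monotonicity $f(j)\ge f(j+1)$ for $j\ge d(\pi)$, which your telescoping computation establishes correctly, using $\pi_{j+1}\le d(\pi)\le j$), and then converting each surviving inequality into the corank form via the column-count identity $\sum_i\min(\pi_i,j)=\sum_{c=1}^{j}\pi_c'$ and the fact that $\min(\pi_i,j)=j$ for $i\le j\le d(\pi)$. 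In effect you have re-proved one of the known equivalences among the graphicality criteria catalogued by Sierksma and Hoogeveen, which the paper simply takes as given. What your approach buys is self-containedness (modulo Erd\H{o}s--Gallai, which is far more widely known than Nash--Williams); what it costs is length, since the equivalence is already standard and the paper's citation-based proof suffices for the purposes of the lemma.
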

\begin{proof}
From the definition of $P^{'}(N,k,l,s)$, we have
\[P^{'}(N,k,l,0)=\{\pi \in P^{'}(N,k,l) |\sum_{i=1}^{j}r_i(\pi)\ge j, 1 \le j \le d(\pi)\}.\] 
From the definition of $G^{'}(N,k,l)$, we have
\[ G^{'}(N,k,l)=\{\pi \in P^{'}(N,k,l) |\pi \mbox{ is graphical}\}.\]
Now based on the Nash-Williams condition \cite{Ruch1979,Rousseau1995,Sierksma1991}, a partition $\pi$
of an even integer is graphical
if and only if $\sum_{i=1}^{j}r_i(\pi)\ge j, 1 \le j \le d(\pi)$. So we have $G^{'}(N,k,l)=P^{'}(N,k,l,0)$.

\end{proof}

Before presenting the algorithms for $|D(n)|$ we need the following lemma that connects the function $|P^{'}(*,*,*,*)|$
to its relative $|P(*,*,*,*)|$ and it will be
used to bridge the gap between the problem of counting graphical partitions and that of counting partitions.
Its proof is based on the observation that there is a bijection between $P^{'}(N,k,l)$ and $P(N-k-l+1,k-1,l-1)$.
\begin{lem}
\label{lem_partition_exact}
\cite[Lemma 4]{BarnesSavage1995} For $N>0,1\le k,l \le N, s\ge 0$,
\[|P^{'}(N,k,l,s)|=|P(N-k-l+1,k-1,l-1,s+l-k-1)|.\]
\end{lem}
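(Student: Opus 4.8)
The plan is to upgrade the bijection between $P'(N,k,l)$ and $P(N-k-l+1,k-1,l-1)$ asserted just before the statement into one that also respects the corank (Nash--Williams) condition, with the index shift $s \mapsto s+l-k-1$ emerging automatically from the $j=1$ instance of that condition. Concretely, for $\pi \in P'(N,k,l)$ I would define $\mu = \phi(\pi)$ by deleting the first row and then the first column of the remaining Ferrers diagram, that is $\mu_i = \pi_{i+1}-1$ for $1 \le i \le l-1$. Since $\pi$ has exactly $l$ positive parts with largest part exactly $k$, the image $\mu$ is a partition of $N-k-l+1$ with at most $l-1$ parts and largest part at most $k-1$, so $\mu \in P(N-k-l+1,k-1,l-1)$; the inverse simply reattaches the first row of length $k$ and first column of height $l$. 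This map is the bijection already claimed in the text, so everything left is bookkeeping on how the corank data transforms.

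First I would record the change in the Durfee size and in the partial corank sums. Writing $R_j(\pi)=\sum_{i=1}^j \pi_i$ and $C_j(\pi)=\sum_{i=1}^j \pi_i'$ for the number of cells in the first $j$ rows and first $j$ columns, we have $\sum_{i=1}^j r_i(\pi)=C_j(\pi)-R_j(\pi)$. Because $\phi^{-1}$ adds a length-$k$ top row and a height-$l$ first column, a direct cell count gives $R_j(\pi)=k+(j-1)+R_{j-1}(\mu)$ and $C_j(\pi)=l+(j-1)+C_{j-1}(\mu)$ for $1\le j\le d(\pi)$ (a range in which column $j$ still meets the top row, which holds since $d(\pi)\le \pi_1=k$). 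Subtracting yields the key identity
\[\sum_{i=1}^j r_i(\pi)=(l-k)+\sum_{i=1}^{j-1} r_i(\mu), \qquad 1\le j\le d(\pi),\]
together with $d(\pi)=d(\mu)+1$: from $\pi_{i+1}=\mu_i+1$ we get $\pi_{i+1}\ge i+1 \iff \mu_i\ge i$, and $\pi_1=k\ge 1$ forces $d(\pi)\ge 1$, so the Durfee side grows by exactly one.

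With $s'=s+l-k-1$, I would then read the corank condition for $\pi$ off term by term. The $j=1$ instance $s+\sum_{i=1}^1 r_i(\pi)\ge 1$ becomes $s+(l-k)\ge 1$, i.e.\ exactly $s'\ge 0$; and for $2\le j\le d(\pi)$, putting $j'=j-1$, the instance $s+\sum_{i=1}^{j} r_i(\pi)\ge j$ rearranges through the identity into $s'+\sum_{i=1}^{j'} r_i(\mu)\ge j'$ for $1\le j'\le d(\mu)$. Hence $\pi\in P'(N,k,l,s)$ iff $s'\ge 0$ and $\mu$ meets all of its own corank inequalities with parameter $s'$, which is precisely $\mu\in P(N-k-l+1,k-1,l-1,s')$ in the case $s'\ge 0$. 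When $s'<0$ both sides are empty: the right side by definition, and the left because the $j=1$ inequality then fails for every $\pi$. In all cases $\phi$ restricts to a bijection between the two sets, giving the asserted equality of cardinalities.

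I expect the only genuine obstacle to be the bookkeeping in the middle step: pinning down the row/column cell-count recurrences correctly (including the off-by-one in the Durfee sizes and the precise range of $j$ for which the column identity is valid), and recognizing that the $j=1$ inequality is not a substantive corank constraint but the encoded nonnegativity $s'\ge 0$, which is exactly what makes the two emptiness regimes line up. Once that identity and the index shift are secured, the equivalence of the two membership conditions is immediate.
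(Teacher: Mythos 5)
Your proof is correct: the row-then-column deletion map $\mu_i=\pi_{i+1}-1$ is exactly the bijection between $P'(N,k,l)$ and $P(N-k-l+1,k-1,l-1)$ that the paper points to (citing Barnes and Savage), and your corank bookkeeping, the identification of the $j=1$ inequality with $s'\ge 0$, and the matching of the two emptiness regimes are all accurate. This is essentially the same argument as in the cited source, carried out in full detail.
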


The following theorem materializes a connection between particular variants of partition counting functions and
graphical partition counting functions. It is important to our algorithms to calculate $|D(n)|$.
\begin{thm}
	\label{thm_graphical_partition}
		For even integer $N \ge 0$, $|G^{'}(N,k,l)|=|P(N-k-l+1,k-1,l-1,l-k-1)|$.
\end{thm}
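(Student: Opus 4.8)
The plan is to obtain the identity by composing the two lemmas that immediately precede the theorem, with the parameter $s$ specialized to $0$. First I would apply Lemma~\ref{lem_graphical_exact}, which for even $N\ge 0$ asserts the set-level equality $G^{'}(N,k,l)=P^{'}(N,k,l,0)$; passing to cardinalities gives $|G^{'}(N,k,l)|=|P^{'}(N,k,l,0)|$. The evenness hypothesis on $N$ enters here and only here, since it is exactly what the Nash--Williams characterization underlying Lemma~\ref{lem_graphical_exact} requires. Next I would invoke Lemma~\ref{lem_partition_exact} with $s=0$, which converts the ``exactly $l$ parts, largest part exactly $k$'' count into an unrestricted count one size down, namely $|P^{'}(N,k,l,0)|=|P(N-k-l+1,\,k-1,\,l-1,\,l-k-1)|$. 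Chaining the two displayed equalities yields the theorem at once.

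Since the argument is a direct composition, the only point that genuinely requires attention is that the hypotheses of Lemma~\ref{lem_partition_exact} hold for the specialization being used. That lemma is stated for $N>0$, $1\le k,l\le N$, and $s\ge 0$; the choice $s=0$ makes the condition $s\ge 0$ automatic, so the substitution $s+l-k-1=l-k-1$ is legitimate and produces precisely the fourth argument appearing in the theorem.

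The main, and essentially only, obstacle is therefore to reconcile the hypotheses: the theorem is stated for $N\ge 0$, whereas Lemma~\ref{lem_partition_exact} assumes $N>0$ together with $1\le k,l\le N$. I would dispose of the excluded cases by a direct inspection showing both sides vanish. When $N=0$, no partition of $0$ has a positive number $l\ge 1$ of parts, so $G^{'}(0,k,l)=\emptyset$, while the right-hand side counts partitions of $N-k-l+1=1-k-l<0$ and is likewise empty. An analogous one-line check covers $k>N$ or $l>N$: a partition of $N$ cannot have largest part exactly $k>N$, nor can it have exactly $l>N$ positive parts, so $P^{'}(N,k,l)=\emptyset$; and in either case $N-k-l+1<0$, so the right-hand side is empty too. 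In all such degenerate regimes both cardinalities are $0$ and the identity holds trivially, while outside them the two-step composition above applies verbatim.
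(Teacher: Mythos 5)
Your proof is correct and takes essentially the same route as the paper, whose entire argument is ``Immediate from Lemma~\ref{lem_graphical_exact} and Lemma~\ref{lem_partition_exact}'' --- i.e., exactly the composition with $s=0$ that you describe. Your explicit check of the degenerate regimes ($N=0$, $k>N$, $l>N$) is more careful than the paper's one-liner but does not constitute a different approach.
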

\begin{proof}
	Immediate from Lemma \ref{lem_graphical_exact} and Lemma \ref{lem_partition_exact}.
\end{proof}

There are tons of efficient algorithms to calculate various partition functions in the literature.
Our algorithms for $|D(n)|$ need an elegant recurrence of Barnes and Savage \cite{BarnesSavage1995} that makes it easy
to design an efficient dynamic programming algorithm to calculate all $|P(*,*,*,*)|$ values.
We now derive the simple connection between $D(n)$ and	$G^{'}(N,k,l)$ and then present our basic algorithm
to calculate $|D(n)|$ that eventually uses those $|P(*,*,*,*)|$ values.

Recall that $D(n)$ denotes the set of degree sequences of length $n$. Since each term in a degree sequence
of length $n$ is between 1 and $n-1$ inclusive, it follows that each degree sequence in $D(n)$ can be
seen as a graphical partition of some even integer $N$ with exactly $n$ parts and largest part exactly $k$
where $n\le N \le n(n-1)$ and $1\le k \le n-1$. Define $G^{'}(N,l)$ to be the set of graphical partitions of $N$
with \textit{exactly} $l$ parts. Also define an index set $I_e(n)=\{N|n\le N\le n(n-1), N \mbox{ is an even integer}\}$.
Based on the above observations we clearly have
\begin{equation} \label{eqn:set_deg_seq_edge}
D(n)=\bigcup_{N\in I_e(n)}G^{'}(N,n)
\end{equation}
and
\begin{equation} \label{eqn:set_deg_seq_edge_largest_part}
G^{'}(N,n)=\bigcup_{k=1}^{n-1}G^{'}(N,k,n),
\end{equation}
which can be combined into
\begin{equation} \label{eqn:set_deg_seq_edge_largest_part2}
D(n)=\bigcup_{N\in I_e(n)}\bigcup_{k=1}^{n-1}G^{'}(N,k,n).
\end{equation}
These are all disjoint unions so that we have
\begin{equation} \label{eqn:deg_seq_edge}
	\begin{aligned} 
|D(n)|=\sum_{N\in I_e(n)}|G^{'}(N,n)|,
	\end{aligned}
\end{equation}
\begin{equation} \label{eqn:graphical_partition}
	\begin{aligned} 
|G^{'}(N,n)|=\sum_{k=1}^{n-1}|G^{'}(N,k,n)|,
	\end{aligned}
\end{equation}
\begin{equation} \label{eqn:deg_seq_combine1}
\begin{aligned} 
|D(n)|=\sum_{N\in I_e(n)}\sum_{k=1}^{n-1}|G^{'}(N,k,n)|.
\end{aligned}
\end{equation}
Substituting the formula in Theorem \ref{thm_graphical_partition} into Equation (\ref{eqn:graphical_partition})
and Equation (\ref{eqn:deg_seq_combine1}) we get
\begin{equation} \label{eqn:graphical_partition_detailed}
\begin{aligned} 
|G^{'}(N,n)|=\sum_{k=1}^{n-1}|P(N-k-n+1,k-1,n-1,n-k-1)|,
\end{aligned}
\end{equation}
\begin{equation} \label{eqn:deg_seq_combine}
	\begin{aligned} 
|D(n)|=\sum_{N\in I_e(n)}\sum_{k=1}^{n-1}|P(N-k-n+1,k-1,n-1,n-k-1)|.
	\end{aligned}
\end{equation}
With Equation (\ref{eqn:deg_seq_combine}) it is sufficient to calculate
a finite number of $|P(*,*,*,*)|$ values to compute $|D(n)|$. As mentioned above, all these $|P(*,*,*,*)|$ values
can be calculated with a recurrence of Barnes and Savage in the following theorem:

\begin{thm}
\label{thm_partition}
\cite[Theorem 1]{BarnesSavage1995} $|P(N,k,l,s)|$ is defined by:
\begin{equation*}
\begin{aligned}
&|P(N,k,l,s)|:= \\
& \quad if((N<0) \mbox{ }or\mbox{ } (k<0) \mbox{ }or\mbox{ } (l<0) \mbox{ }or\mbox{ } (s<0)) \mbox{ } \qquad then: 0\\
& \quad else \mbox{   } if (N=0)  \qquad \qquad \qquad \qquad \qquad \qquad \qquad \quad then: 1 \\
& \quad else \mbox{   } if((k=0) \mbox{ }or\mbox{ } (l=0)) \qquad \qquad \qquad \qquad \qquad then: 0 \\
& \quad else \mbox{   } if(k>N) \mbox{   } \qquad \qquad \qquad \qquad \qquad \qquad then: |P(N,N,l,s)| \\
& \quad else \mbox{   } if(l>N) \mbox{   } \qquad \qquad \qquad \qquad \qquad \qquad then: |P(N,k,N,s)| \\
& \quad else \mbox{   } if(s>N) \mbox{   } \qquad \qquad \qquad \qquad \qquad \qquad then: |P(N,k,l,N)| \\
& \quad else: \mbox{   } |P(N,k-1,l,s)|+|P(N,k,l-1,s)|-|P(N,k-1,l-1,s)| \\
& \quad \qquad +|P(N-k-l+1,k-1,l-1,s+l-k-1)| 
\end{aligned}
\end{equation*}
\end{thm}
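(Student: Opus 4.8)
The plan is to verify the recurrence case by case, following the order of the guards, and to reduce the only substantive combinatorial content to a box inclusion--exclusion together with the already-available Lemma \ref{lem_partition_exact}.

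First I would dispose of the degenerate cases. When any of $N,k,l,s$ is negative the set $P(N,k,l,s)$ is empty, either by definition (for $s<0$) or because there is no partition of a negative integer, no partition under a negative bound on the number of parts, and none under a negative bound on the largest part; hence the count is $0$. When $N=0$ the empty partition is the unique element, its Durfee square has size $d=0$, so the defining inequalities $s+\sum_{i=1}^{j}r_i(\pi)\ge j$ range over the empty set of indices and hold vacuously; thus the count is $1$ for all $k,l,s\ge 0$. When $N>0$ but $k=0$ or $l=0$, a partition of $N$ needs at least one nonzero part, which is impossible under a zero bound on the largest part or on the number of parts, so the count is $0$.

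Next I would treat the three clamping reductions. For $k>N$ every part of a partition of $N$ is at most $N<k$, so the bound ``largest part at most $k$'' is not binding and $P(N,k,l,s)=P(N,N,l,s)$; since the $s$-predicate depends only on $\pi$ and not on the ambient box, the constrained sets coincide as well. The case $l>N$ is symmetric, using that a partition of $N$ into positive parts has at most $N$ parts. The case $s>N$ is the one genuinely interesting reduction: I would show that once $s\ge N$ the Nash-Williams-type inequalities are automatically satisfied by every partition of $N$, so $P(N,k,l,s)=P(N,k,l)=P(N,k,l,N)$. The key estimate is that for $j\le d(\pi)$ the first $j$ columns each have height at least $d(\pi)$ inside the Durfee square, while $\sum_{i=1}^{j}\pi_i\le N$, giving
\[
\sum_{i=1}^{j}r_i(\pi)=\sum_{i=1}^{j}\pi_i'-\sum_{i=1}^{j}\pi_i\ \ge\ j\,d(\pi)-N\ \ge\ j-N,
\]
since $d(\pi)\ge 1$ when $N>0$; adding $s\ge N$ then yields $s+\sum_{i=1}^{j}r_i(\pi)\ge j$. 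Monotonicity of the set in $s$ forces equality for all $s>N$.

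Finally, for the main branch (where $N>0$, $1\le k,l\le N$, and $0\le s\le N$) I would apply the standard box inclusion--exclusion to the underlying partitions. Splitting $P(N,k,l,s)$ according to whether the largest part is at most $k-1$ and whether the number of parts is at most $l-1$, and noting once more that the $s$-predicate is a property of $\pi$ alone, gives
\[
|P(N,k,l,s)|=|P(N,k-1,l,s)|+|P(N,k,l-1,s)|-|P(N,k-1,l-1,s)|+|P'(N,k,l,s)|,
\]
where $P'(N,k,l,s)$ is the subset with largest part exactly $k$ and exactly $l$ parts. Substituting Lemma \ref{lem_partition_exact}, whose hypotheses $1\le k,l\le N$ and $s\ge 0$ hold in this branch, replaces the last term by $|P(N-k-l+1,k-1,l-1,s+l-k-1)|$ and reproduces the claimed formula. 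I expect the main obstacle to be the $s>N$ reduction: the inclusion--exclusion is routine once one observes that the corank constraint is box-invariant, but establishing that $s\ge N$ trivializes the Nash-Williams inequalities requires the corank partial-sum bound above, and one must be careful that it genuinely relies on $d(\pi)\ge 1$, i.e.\ on $N>0$.
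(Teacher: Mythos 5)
Your argument is correct. Note, however, that the paper offers no proof of this statement at all: it is quoted verbatim as \cite[Theorem 1]{BarnesSavage1995}, so there is no internal proof to compare against. What you have written is essentially the original Barnes--Savage argument: the degenerate and clamping cases are routine, the $s>N$ reduction follows from the corank partial-sum bound $\sum_{i=1}^{j}r_i(\pi)\ge j\,d(\pi)-N\ge j-N$ (valid since $d(\pi)\ge 1$ when $N>0$), and the main branch is the box inclusion--exclusion combined with Lemma \ref{lem_partition_exact}, whose hypotheses $N>0$, $1\le k,l\le N$, $s\ge 0$ are exactly what the preceding guards guarantee. The one observation worth making explicit, which you do make, is that the corank condition defining $P(N,k,l,s)$ is intrinsic to $\pi$ and independent of the ambient box $(k,l)$, which is what lets the inclusion--exclusion pass from $P(N,\cdot,\cdot)$ to $P(N,\cdot,\cdot,s)$. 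I see no gaps.
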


\begin{algorithm}
	\DontPrintSemicolon 
	\KwIn{A positive integer $n$}
	\KwOut{$|D(n)|$}
	$N \gets n(n-1)$\;
	Allocate a four dimensional array $P[N-n+1][n-1][n][N]$\;
	Fill in the array $P$ using dynamic programming based on Theorem \ref{thm_partition}\;
	$S \gets 0$\;
	\For{$i \in I_e(n)$ } {
		\For{$j \gets 1$ \textbf{to} $\min ({n-1,i-n+1})$ } {
				$S \gets S+P[i-j-n+1][j-1][n-1][n-j-1]$\;
		}
	}
	\Return{$S$}\;
	\caption{Basic Algorithm to compute $|D(n)|$. The index set $I_e(n)=\{N|n\le N\le n(n-1), N \mbox{ is an even integer}\}$.}
	\label{algo:basic}
\end{algorithm}

With the above definitions and techniques, we present our basic algorithm to calculate $|D(n)|$ as
pseudo-codes in Algorithm \ref{algo:basic}.
The declaration of the array $P$ in line 2 specifies the sizes of the four dimensions. When filling in and accessing
the array elements we use the convention that array indices start from 0. Also note that based on the recurrence
in Theorem \ref{thm_partition} the $|P(*,*,l,*)|$ values only depend on the $|P(*,*,l-1,*)|$ values, and for
the purpose of calculating $|D(n)|$ only the $|P(*,*,n-1,*)|$ values are used. These
make it feasible to allocate only size 2 for the third dimension of the array $P$ to save memory. For each fixed $i$ in line 5,
the sum accumulated in the inner loop from lines 6 to 7 is the value $|G^{'}(i,n)|$ based on Equation (\ref{eqn:graphical_partition_detailed})
and it can also be saved as additional output if desired. The integer sequence $|D(n)|$ is in the OEIS
entry \textcolor{red}{\href{https://oeis.org/A095268}{A095268}}.

We note that Equation (\ref{eqn:deg_seq_combine}) and Algorithm \ref{algo:basic} are obtained
by partitioning the set $D(n)$ into disjoint subsets $G'(N,n), N\in I_e(n)$ based on degree sums. Alternatively, we
can partition the set $D(n)$ into disjoint subsets based on the largest term in each degree sequence.
This way effectively only produces a formula that interchanges the two sum symbols in the double sums
in Equation (\ref{eqn:deg_seq_combine1}) and Equation (\ref{eqn:deg_seq_combine}). The change that can be made to Algorithm
\ref{algo:basic} based on the equivalent new formula is to simply interchange the two for loop lines 5 and 6, making the sums accumulated in the
inner loop the number of degree sequences of given length $n$ and fixed largest term $j$. Thus, we can see that
the set of degree sequences of fixed length and fixed sum and the set of degree sequences of fixed length
and fixed largest term can all be efficiently counted.
\subsection{Some Novel Recurrences and the Improved Algorithm}
\label{subsec:recur}
In this section we first review some old and also derive novel recurrences involving the sizes of $D(n)$
and various related sets.
We then present an improved algorithm to compute $|D(n)|$ based on these recurrences.

Define $H(n)$ to be the set of degree sequences of length $n$ with largest part exactly $n-1$ and $L(n)$
the set of degree sequences of length $n$ with largest part less than $n-1$. Clearly $D(n)$ is the disjoint
union of $H(n)$ and $L(n)$. Let $D_0(n)$ denote
the set of degree sequences of length $n$ allowing zero terms. The OEIS entry \textcolor{red}{\href{https://oeis.org/A004251}{A004251}}
records the values of $|D_0(n)|$. 
We now present two propositions demonstrating the relationships among the cardinalities of these sets that will
be used by our improved algorithm. The following proposition is not hard to
derive by the simple observation that $D_0(n)$ is the disjoint union of
$D(n)$ and the set of degree sequences of length $n$ containing at least one zero.

\begin{proposition}
\label{thm_deg_seq} \cite[Lemma 2]{Ivanyi2013}: $|D_0(n)|=|D_0(n-1)|+|D(n)|$, for $n\ge 2$. And
\[ |D_0(n)|=\left\{ \begin{array}{ll}
1 & \mbox{if $n=1$};\\
1+\sum_{i=2}^{n}|D(i)| & \mbox{if $n \ge 2$}.\end{array} \right.\]
\end{proposition}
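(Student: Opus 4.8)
The plan is to establish the recurrence $|D_0(n)|=|D_0(n-1)|+|D(n)|$ by a direct set-partition argument, and then obtain the closed form by unrolling the recurrence down to the base case $n=1$. The key observation, as the excerpt hints, is that $D_0(n)$ (degree sequences of length $n$ allowing zero terms) splits as a disjoint union according to whether a sequence contains a zero term or not. First I would define $Z(n)$ to be the set of length-$n$ degree sequences that contain at least one zero term, so that $D_0(n)$ is the disjoint union $D_0(n) = D(n) \sqcup Z(n)$, since $D(n)$ is precisely the subset of $D_0(n)$ whose terms are all positive (zero-free). Disjointness is immediate and the union is exhaustive by definition, giving $|D_0(n)| = |D(n)| + |Z(n)|$.

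The crux is then to show $|Z(n)| = |D_0(n-1)|$ for $n \ge 2$. I would exhibit a bijection between $Z(n)$ and $D_0(n-1)$ by deleting one zero term. Given a sequence in $Z(n)$, which realizes some simple graph $G$ on $n$ vertices having at least one isolated vertex (a vertex of degree $0$), remove a single isolated vertex to obtain a graph on $n-1$ vertices whose degree sequence lies in $D_0(n-1)$. Conversely, given any degree sequence in $D_0(n-1)$ realized by a graph $H$ on $n-1$ vertices, adjoin one isolated vertex to get a graph on $n$ vertices whose degree sequence lies in $Z(n)$. Since we are working with unordered sequences (as emphasized in the introduction), appending or deleting a single zero term is well-defined on the level of multisets of degrees, and these two operations are mutually inverse, establishing the bijection and hence $|Z(n)| = |D_0(n-1)|$. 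Combining with the previous display yields the recurrence $|D_0(n)| = |D_0(n-1)| + |D(n)|$.

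For the closed form, I would unroll the recurrence. The base case is $n=1$: the only length-$1$ degree sequence allowing zeros is the single term $0$ (the isolated vertex), so $|D_0(1)| = 1$. Telescoping the recurrence from $n$ down to $2$ gives
\[
|D_0(n)| = |D_0(1)| + \sum_{i=2}^{n} |D(i)| = 1 + \sum_{i=2}^{n} |D(i)|
\]
for $n \ge 2$, which is exactly the claimed formula.

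**The main obstacle** I anticipate is the careful handling of the bijection in the borderline case, namely making sure the deletion/insertion of an isolated vertex is genuinely well-defined and reversible on \emph{unordered} degree sequences rather than on particular graph realizations. One must check that membership in $D_0(n-1)$ versus $D_0(n)$ depends only on the multiset of degrees and not on a chosen realizing graph, so that adjoining a $0$ always produces a valid element of $Z(n)$ and deleting a $0$ always produces a valid element of $D_0(n-1)$; this is precisely where the realizability (graphicality) of the truncated sequence must be verified, though it follows readily since deleting an isolated vertex from any realizing graph leaves a valid simple graph. Since the paper cites this as \cite[Lemma 2]{Ivanyi2013}, the expected proof is short and this set-theoretic argument should suffice.
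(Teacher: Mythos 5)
Your proposal is correct and takes essentially the same route the paper indicates: the paper offers only the one-line observation that $D_0(n)$ is the disjoint union of $D(n)$ and the set of length-$n$ degree sequences containing a zero (citing Iv\'{a}nyi \emph{et al.} for the rest), and your bijection between that latter set and $D_0(n-1)$ by deleting or adjoining an isolated vertex, followed by telescoping from the base case $|D_0(1)|=1$, is the natural and correct completion of that argument.
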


Based on Proposition \ref{thm_deg_seq} it is a trivial thing to compute $|D_0(n)|$ once all $|D(i)|$ such that $i\le n$ have been computed.
We also have the following proposition that transforms the problem of computing $|D(n)|$ to that
of computing $|L(n)|$.

\begin{proposition}
\label{thm_deg_seq_high_low} $|H(n)|=|D_0(n-1)|$, and $|D(n)|=|D_0(n-1)|+|L(n)|$, for $n\ge 2$.
\end{proposition}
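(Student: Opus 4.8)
```latex
The plan is to prove the two claimed identities via explicit bijections between the relevant sets of degree sequences, since all quantities involved are cardinalities of finite sets of partitions.

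First I would establish $|H(n)|=|D_0(n-1)|$. Recall $H(n)$ is the subset of $D(n)$ whose largest part is exactly $n-1$; a graph realizing such a degree sequence has a vertex $v$ of degree $n-1$, i.e.\ a vertex adjacent to every other vertex. The natural map is to delete this ``universal'' vertex: given a degree sequence $(d_1,\ldots,d_n)\in H(n)$ with $d_1=n-1$, remove the part $d_1$ and subtract $1$ from each of the remaining $n-1$ parts, producing a sequence of length $n-1$ whose terms lie in $\{0,\ldots,n-2\}$. I would argue that this map sends $H(n)$ into $D_0(n-1)$ and is a bijection. The key point is a graph-theoretic realizability argument: a sequence $(d_1,\ldots,d_n)$ with $d_1=n-1$ is graphical if and only if $(d_2-1,\ldots,d_n-1)$ is graphical (allowing zeros), because one can always assume the degree-$(n-1)$ vertex is joined to all others, and deleting it decrements every other degree by exactly one. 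Conversely, given any sequence in $D_0(n-1)$, adding a new universal vertex (incrementing every part by $1$ and appending the part $n-1$) produces a valid degree sequence of length $n$ with largest part exactly $n-1$, so the map is invertible. Care must be taken that the resulting length-$(n-1)$ sequence genuinely ranges over all of $D_0(n-1)$, including sequences with zero parts (these correspond to vertices adjacent only to $v$), which is why the zero-allowing set $D_0(n-1)$ rather than $D(n-1)$ is the correct target.

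The second identity $|D(n)|=|D_0(n-1)|+|L(n)|$ then follows immediately from the first together with the decomposition already noted in the text: $D(n)$ is the disjoint union of $H(n)$ and $L(n)$, so $|D(n)|=|H(n)|+|L(n)|=|D_0(n-1)|+|L(n)|$, using $|H(n)|=|D_0(n-1)|$. I would present this as a one-line corollary of the first part.

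The main obstacle is the realizability equivalence at the heart of the first identity, namely verifying rigorously that decrementing all non-maximal parts by $1$ and deleting the maximal part preserves graphicness in both directions. The forward direction needs the observation that any realization can be taken to have the maximum-degree vertex adjacent to all others (which for a degree-$(n-1)$ vertex is forced), while the reverse direction is the straightforward construction of adding a dominating vertex. I would lean on one of the standard equivalent graphicness criteria (for instance the Erd\H{o}s--Gallai or Nash--Williams conditions cited earlier) to formalize the equivalence cleanly rather than arguing purely combinatorially about the partition statistics, thereby avoiding a delicate manipulation of Durfee squares and corank vectors.
```
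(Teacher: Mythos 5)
Your proposal is correct and follows essentially the same route as the paper: both prove $|H(n)|=|D_0(n-1)|$ via the bijection that deletes the forced universal vertex (decrementing every other degree by one) and inverts it by adding a dominating vertex, and both then obtain the second identity from the disjoint union $D(n)=H(n)\cup L(n)$. The extra care you propose about realizability is fine but not needed beyond the observation, which you already make, that a degree-$(n-1)$ vertex in a simple graph on $n$ vertices is necessarily adjacent to all others.
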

\begin{proof}
Each simple undirected graph $G$ with $n$ vertices having a vertex of degree $n-1$ can be transformed into
a graph $G_1$ of $n-1$ vertices (possibly with isolated vertices) by removing the vertex of degree $n-1$.
Thus, $H(n)\subseteq D_0(n)$.

Conversely, each graph $G_1$ of $n-1$ vertices (possibly with isolated vertices) can be transformed into a graph $G$ with $n$
vertices having a vertex of degree $n-1$ by adding a new vertex and making it adjacent to all the vertices originally in $G_1$.
Thus, $D_0(n)\subseteq H(n)$. This bijection establishes that $|H(n)|=|D_0(n-1)|$.

Since $D(n)=H(n)\cup L(n)$, and this is a disjoint union, we have $|D(n)|=|H(n)|+|L(n)|=|D_0(n-1)|+|L(n)|.$
\end{proof}
	
By Propositions \ref{thm_deg_seq} and \ref{thm_deg_seq_high_low}, it is sufficient to compute $|L(n)|$ in order to
compute $|D(n)|$ when all $|D(i)|$ such that $i<n$ are already known. Not surprisingly, we will show that $|L(n)|$
can also be computed eventually through the $|P(*,*,*,*)|$ values.

Define $L^{'}(N,l)$ to be the set of graphical partitions of $N$
with exactly $l$ parts and largest part less than $l-1$. Also define an index set $J_e(n)=\{N|n\le N\le n(n-2), N \mbox{ is an even integer}\}$. It is easily seen that	
\begin{equation} \label{eqn:lower_sum_set}
	\begin{aligned}
	L(n)=\bigcup_{N\in J_e(n)}L^{'}(N,n)
	\end{aligned}
\end{equation}	
and this is a disjoint union so that we have
\begin{equation} \label{eqn:lower_sum}
\begin{aligned}
|L(n)|=\sum_{N\in J_e(n)}|L^{'}(N,n)|.
\end{aligned}
\end{equation}

By definition we have
\begin{equation} \label{eqn:lower_sum_partition_set}
\begin{aligned} 
L^{'}(N,n)=\bigcup_{k=1}^{n-2}G^{'}(N,k,n)
\end{aligned}
\end{equation}
and by disjointness of this union we have
\begin{equation} \label{eqn:lower_sum_partition}
\begin{aligned}
|L^{'}(N,n)|=\sum_{k=1}^{n-2}|G^{'}(N,k,n)|.
\end{aligned}
\end{equation}
Combining Equations (\ref{eqn:lower_sum}) and (\ref{eqn:lower_sum_partition}) and substituting the formula in Theorem \ref{thm_graphical_partition} into Equation (\ref{eqn:lower_sum_partition}) we get
\begin{equation} \label{eqn:lower_sum_final}
\begin{aligned} 
\begin{split}
|L(n)|&=\sum_{N\in J_e(n)}\sum_{k=1}^{n-2}|G^{'}(N,k,n)|\\
&=\sum_{N\in J_e(n)}\sum_{k=1}^{n-2}|P(N-k-n+1,k-1,n-1,n-k-1)|
\end{split}
\end{aligned}
\end{equation}
and this shows how $|L(n)|$ can also be calculated by summing a finite number of $|P(*,*,*,*)|$ values.
In fact, the calculation of $|L(n)|$ can be further sped up by the following proposition.

\begin{proposition}
	\label{thm_lower_complementary} $ |L^{'}(N,n)|=|L^{'}(n(n-1)-N,n)|$, for $N \in J_e(n)$.
\end{proposition}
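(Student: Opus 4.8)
The plan is to prove the identity by exhibiting an explicit involution on graphical partitions given by graph complementation. Given a partition $\pi = (\pi_1, \ldots, \pi_n) \in L'(N,n)$, I would realize it as the degree sequence of a simple graph $G$ on $n$ vertices and send it to the degree sequence $\bar\pi$ of the complement graph $\bar G$, so that $\bar\pi_i = (n-1) - \pi_i$ for each $i$. Since $\bar{\bar G} = G$, this map is its own inverse, so once it is shown to be well defined between the two sets it is automatically a bijection and the two cardinalities are equal.

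First I would record the effect on the degree sum: $\sum_{i=1}^n \bar\pi_i = n(n-1) - \sum_{i=1}^n \pi_i = n(n-1) - N$, so $\bar\pi$ is a partition of the intended integer. I would also note that $n(n-1) - N \in J_e(n)$: because $n(n-1)$ is even, parity is preserved, and as $N$ ranges over $[n, n(n-2)]$ so does $n(n-1) - N$, so the codomain $L'(n(n-1)-N, n)$ is exactly the set appearing in the statement.

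Next I would verify that $\bar\pi$ satisfies the defining constraints of $L'(n(n-1)-N,n)$. The number of parts is preserved at exactly $n$, and because $\bar G$ is again a simple graph, $\bar\pi$ is graphical (this is the one standard fact I would invoke, namely that the complement of a simple graph is simple, so that a complementary degree sequence is again realizable). The two inequality constraints then exchange roles: the hypothesis that every part of $\pi$ is positive (it is a partition, hence zero-free) forces $\bar\pi_i = (n-1)-\pi_i \le n-2 < n-1$, giving the required bound on the largest part of $\bar\pi$; conversely the hypothesis that the largest part of $\pi$ is less than $n-1$, i.e. $\pi_i \le n-2$ for all $i$, forces $\bar\pi_i \ge 1$, so that $\bar\pi$ is itself zero-free with exactly $n$ parts. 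Hence $\bar\pi \in L'(n(n-1)-N, n)$.

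The argument rests on a single clean observation, so there is no serious obstacle; the only point that demands care is the bookkeeping in the preceding step, namely that complementation \emph{swaps} the ``zero-free'' condition with the ``largest part less than $n-1$'' condition rather than preserving either one separately. Making these two implications precise, together with the involution property $\bar{\bar\pi} = \pi$, completes the proof.
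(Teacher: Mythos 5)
Your proof is correct and follows essentially the same route as the paper: both use the complementation map $d_i \mapsto (n-1)-d_i$ (equivalently, passing to the complement graph) and observe that it swaps the zero-free condition with the largest-part-less-than-$n-1$ condition while sending the degree sum $N$ to $n(n-1)-N$. Your version simply spells out the involution property and the bookkeeping that the paper's one-line argument leaves implicit.
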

\begin{proof}
Each degree sequence $d_1\ge d_2 \ge \ldots \ge d_n >0$ in $L(n)$ with sum $N$ and $d_1<n-1$ can be transformed into a
degree sequence $n-1-d_n\ge n-1-d_{n-1} \ge \ldots \ge n-1-d_1 >0$ with sum $n(n-1)-N$, which still lies
in $L(n)$ since $n-1-d_n<n-1$.
\end{proof}

Based on Equation (\ref{eqn:lower_sum}) and Proposition \ref{thm_lower_complementary}, it is sufficient to
calculate the $|L^{'}(N,n)|$ values for even $N$ between $n$ and $n(n-1)/2$ in order to calculate $|L(n)|$,
apparently saving the work load by a constant factor.

\begin{algorithm}
	\DontPrintSemicolon 
	\KwIn{A positive integer $n$}
	\KwOut{$|D(n)|$}
	$N \gets n(n-1)/2$\;
	Allocate a four dimensional array $P[N-n+1][n-2][n][N]$\;
	Fill in the array $P$ using dynamic programming based on Theorem \ref{thm_partition}\;
	$S \gets 0$\;
	\For{$i \in J'_e(n)$ } {
		\For{$j \gets 1$ \textbf{to} $\min ({n-2,i-n+1})$ } {
			$S \gets S+P[i-j-n+1][j-1][n-1][n-j-1]$\;
		}
	}
	$S \gets 2S$\;
	\If{$N$ is even}	{ 
		\For{$j \gets 1$ \textbf{to} $\min ({n-2,N-n+1})$ } {
			$S \gets S+P[N-j-n+1][j-1][n-1][n-j-1]$\;
		}
	}
	Calculate $|D_0(n-1)|$ using saved values of $|D(i)|,i<n$ by Proposition \ref{thm_deg_seq}\;
	\Return{$S+|D_0(n-1)|$}\;
	\caption{Improved Algorithm to compute $|D(n)|$, assuming all $|D(i)|$ such that $i<n$ are already computed. The
		index set $J'_e(n)=\{N|n\le N< n(n-1)/2, N \mbox{ is an even integer}\}$.}
	\label{algo:improved}
\end{algorithm}

We now present an improved Algorithm \ref{algo:improved} to calculate $|D(n)|$ based on the above definitions
and techniques, assuming all values of $|D(i)|$ such that $i<n$ have been calculated.
The index set $J'_e(n)$
used in the improved algorithm is defined as $J'_e(n)=\{N|n\le N< n(n-1)/2, N \mbox{ is an even integer}\}$.
The variable $S$ is used to store the value of $|L(n)|$.
As in Algorithm \ref{algo:basic}, the size for the third dimension of the array $P$ can also be reduced to 2 for
the purpose of computing $|L(n)|$ and $|D(n)|$. The individual sums accumulated in the loop
from lines 6 to 7 and lines 10 to 11 are the values $|L^{'}(*,n)|$ based on Equation (\ref{eqn:lower_sum_partition})
and Equation (\ref{eqn:lower_sum_final})
and they together with the $|L^{'}(n(n-1)-*,n)|$ values can also be saved as additional output if desired.
Algorithm \ref{algo:improved}
is called an improved algorithm because it achieves a constant factor improvement over Algorithm \ref{algo:basic}
(see complexity analysis in Section \ref{sec:Analysis}). Although using more time and space, Algorithm \ref{algo:basic}
does have the advantage of applicability for any $n$ without the need to know all the values of $|D(i)|$ such that $i<n$
as in Algorithm \ref{algo:improved}.

Before concluding this section we derive one more interesting recurrence. Define
$H^{'}(N,l)$ to be the set of graphical partitions of $N$
with exactly $l$ parts and largest part exactly $l-1$. Also define an index set
$I'_e(n)=\{N|2(n-1)\le N\le n(n-1), N \mbox{ is an even integer}\}$. By definitions it is easily seen that
\begin{equation} \label{eqn:higher_sum_set}
\begin{aligned}
H(n)=\bigcup_{N\in I'_e(n)}H^{'}(N,n)
\end{aligned}
\end{equation}
and this is a disjoint union so that we have
\begin{equation} \label{eqn:higher_sum}
\begin{aligned}
|H(n)|=\sum_{N\in I'_e(n)}|H^{'}(N,n)|.
\end{aligned}
\end{equation}

By definition we have
\begin{equation} \label{eqn:higher_sum_partition_set}
\begin{aligned}
H^{'}(N,n)=G^{'}(N,n-1,n),
\end{aligned}
\end{equation}
and
\begin{equation} \label{eqn:higher_lower_partition_set}
\begin{aligned}
G^{'}(N,n)=H^{'}(N,n) \cup L^{'}(N,n), \qquad |G^{'}(N,n)|=|H^{'}(N,n)|+ |L^{'}(N,n)|.
\end{aligned}
\end{equation}

The following proposition indicates a symmetry of $|H^{'}(N,n)|$ similar to that of $|L^{'}(N,n)|$ as shown
in Proposition \ref{thm_lower_complementary}.
\begin{proposition}
	\label{thm_higher_complementary}
	$|H^{'}(N,n)|=|H^{'}((n+2)(n-1)-N,n)|$, for $N \in I'_e(n)$.
\end{proposition}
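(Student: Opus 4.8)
The plan is to construct an explicit sum-reversing involution on $\bigcup_{N} H^{'}(N,n)$, mirroring the complementation trick of Proposition \ref{thm_lower_complementary} but corrected for the fact that every member of $H^{'}(N,n)$ has a forced part equal to $n-1$. First I would record that, since the largest part of a sequence in $H^{'}(N,n)$ is exactly $n-1$, any graph $G$ realizing it has a \emph{universal} vertex $v$ (one adjacent to all others), and the remaining $n-1$ vertices carry degrees $d_2,\ldots,d_n$ in $G$, with $d_i-1$ being their degree inside the induced subgraph $G-v$.

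The map I would use fixes $v$ and complements the induced subgraph on the other $n-1$ vertices, keeping $v$ adjacent to everyone. On the level of degree sequences this sends $(n-1,d_2,\ldots,d_n)$ to $(n-1,\,n-d_2,\ldots,n-d_n)$: a vertex $v_i\neq v$ of degree $d_i$ acquires degree $1+\big((n-2)-(d_i-1)\big)=n-d_i$, while $v$ keeps degree $n-1$. A direct computation then gives that the new degree sum is $(n-1)+\sum_{i=2}^{n}(n-d_i)=(n+2)(n-1)-N$, which is exactly the target value.

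Next I would check that the image genuinely lies in $H^{'}((n+2)(n-1)-N,n)$. Graphicality is free, because the image is literally the degree sequence of the graph just constructed. The sequence still has exactly $n$ parts, and its largest part is exactly $n-1$ (the fixed vertex $v$). Positivity is where the earlier full-complement argument would fail: a naive complement of the whole sequence would send the universal vertex to degree $0$, leaving $D(n)$; here the universal vertex is untouched, and for $i\ge 2$ the bounds $1\le d_i\le n-1$ give $1\le n-d_i\le n-1$, so all parts remain admissible. Finally, the map is visibly an involution ($d_i\mapsto n-d_i\mapsto d_i$ on the tail, with the leading $n-1$ fixed), hence a bijection between $H^{'}(N,n)$ and $H^{'}((n+2)(n-1)-N,n)$, yielding the claimed equality of cardinalities.

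The only real obstacle is the bookkeeping that keeps the construction inside $H^{'}$ rather than merely inside $D(n)$ or $D_0(n)$: one must complement only the $n-1$ non-universal vertices and verify positivity of the resulting tail. Should a purely combinatorial formulation be preferred, an alternative route is to compose the bijection $H(n)\cong D_0(n-1)$ of Proposition \ref{thm_deg_seq_high_low} (restricted to the relevant degree sum) with the complementation involution $e_i\mapsto (n-2)-e_i$ on $D_0(n-1)$; tracking the sums through both maps reproduces the same shift $N\mapsto (n+2)(n-1)-N$.
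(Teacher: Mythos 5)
Your proof is correct and is essentially the paper's own argument in graph-theoretic clothing: the paper removes the degree-$(n-1)$ vertex via a Havel--Hakimi step, appends a zero, and complements the resulting length-$n$ sequence within $[0,n-1]$, which induces exactly your ``fix the universal vertex $v$ and complement $G-v$'' map on degree sequences. Both routes produce the same involution $(n-1,d_2,\ldots,d_n)\mapsto(n-1,n-d_2,\ldots,n-d_n)$ with the same sum shift $N\mapsto(n+2)(n-1)-N$, so no further comparison is needed.
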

\begin{proof}
Each degree sequence $d_1\ge d_2 \ge \ldots \ge d_n >0$ in $H(n)$ with sum $N$ and $d_1=n-1$ can be first transformed into a
degree sequence allowing zero terms (of length $n$, by appending a zero after $d_n-1$)
$d_2-1 \ge d_3-1 \ge \ldots \ge d_n-1 \ge 0 \ge 0$ by applying
the Havel-Hakimi algorithm \cite{Havel1955,Hakimi1962} which
characterizes equivalent realizable sequences, and then transformed into a degree sequence (of length $n$)
$n-1 \ge n-d_n\ge n-d_{n-1} \ge \ldots \ge n-d_2 >0$ with sum $(n+2)(n-1)-N$, which still lies
in $H(n)$.
\end{proof}

From available numerical results we observe that for fixed $n$ the sequence of $|H^{'}(N,n)|$ for $N\in I'_e(n)$
is unimodal and the maximum value in the sequence occurs when $N$ is an even integer closest to $(n^2+n-2)/2$.
It is also observed that for fixed $n$ the sequence of $|L^{'}(N,n)|$ for $N\in J_e(n)$ is unimodal and
the maximum value in the sequence occurs when $N$ is an even integer closest to $(n^2-n)/2$. Based on Equation
(\ref{eqn:higher_lower_partition_set}) and available numerical results we also conjecture that for fixed $n$
the sequence of $|G^{'}(N,n)|$ for $N\in I_e(n)$
is unimodal and the maximum value in the sequence occurs when $N$ is closest to $n^2/2-n/6$. One of the ways to determine the
asymptotic order of $|D(n)|$ might be to determine the order of $|G^{'}(N,n)|$ for a narrow range of $N$ centered around $n^2/2-n/6$.
The rigorous proof of these results might need advanced techniques.

\section{Algorithms to compute the number of degree sequences of connected graphs on $n$ vertices}
\label{sec:count_connected}
In this section we apply the techniques employed in the two algorithms for computing $|D(n)|$ in Section \ref{sec:Algo} to derive similar
algorithms for computing the number of degree sequences of connected graphs on $n$ vertices.

Recall that a degree sequence is called \textit{potentially} connected (resp. disconnected) if it has a connected
(resp. disconnected) realization and it is called \textit{forcibly} connected (resp. disconnected) if all of
its realizations are connected (resp. disconnected).
Let $D_c(n)$ denote the subset of $D(n)$ that are potentially connected and $D_d(n)$ the subset of $D(n)$ that are
forcibly disconnected. Clearly $D(n)$ is the disjoint union of $D_c(n)$ and $D_d(n)$. A degree sequence
$d=(d_1,d_2,\cdots,d_n)$ in $D(n)$ is potentially connected if and only if $\sum_{i=1}^{n}d_i\ge 2(n-1)$ \cite{Berge1973}.
Because of this simple characterization it is easily seen by definitions that
\begin{equation} \label{eqn:degseq_set_potentially_connected}
\begin{aligned}
D_c(n)=\bigcup_{N\in I'_e(n)}G^{'}(N,n)=\bigcup_{N\in I'_e(n)}\bigcup_{k=1}^{n-1}G^{'}(N,k,n),
\end{aligned}
\end{equation}

and

\begin{equation} \label{eqn:degseq_set_forcibly_disconnected}
\begin{aligned}
D_d(n)=\bigcup_{N\in I_e(n)-I'_e(n)}G^{'}(N,n)=\bigcup_{N\in I_e(n)-I'_e(n)}\bigcup_{k=1}^{n-1}G^{'}(N,k,n).
\end{aligned}
\end{equation}
Again by disjointness we have
\begin{equation} \label{eqn:degseq_potentially_connected}
\begin{aligned}
\begin{split}
|D_c(n)|&=\sum_{N\in I'_e(n)}\sum_{k=1}^{n-1}|G^{'}(N,k,n)|\\
&=\sum_{N\in I'_e(n)}\sum_{k=1}^{n-1}|P(N-k-n+1,k-1,n-1,n-k-1)|
\end{split}
\end{aligned}
\end{equation}

and

\begin{equation} \label{eqn:degseq_forcibly_disconnected}
\begin{aligned}
\begin{split}
|D_d(n)|&=\sum_{N\in I_e(n)-I'_e(n)}\sum_{k=1}^{n-1}|G^{'}(N,k,n)|\\
&=\sum_{N\in I_e(n)-I'_e(n)}\sum_{k=1}^{n-1}|P(N-k-n+1,k-1,n-1,n-k-1)|.
\end{split}
\end{aligned}
\end{equation}
If $|D(n)|$ has already been obtained, we can choose to compute $|D_d(n)|$ in order to get $|D_c(n)|=|D(n)|-|D_d(n)|$
for performance reasons. Now we present Algorithm \ref{algo:potentially_connected} to compute $|D_c(n)|$ directly
based on Equation (\ref{eqn:degseq_potentially_connected}) and Algorithm \ref{algo:forcibly_disconnected}
to compute $|D_c(n)|$ indirectly through computing $|D_d(n)|$ based on Equation (\ref{eqn:degseq_forcibly_disconnected}).
As in Algorithms \ref{algo:basic} and \ref{algo:improved}
we can still allocate size 2 for the third dimension of the array $P$.

\begin{algorithm}[h]
	\DontPrintSemicolon 
	\KwIn{A positive integer $n$}
	\KwOut{$|D_c(n)|$}
	$N \gets n(n-1)$\;
	Allocate a four dimensional array $P[N-n+1][n-1][n][N]$\;
	Fill in the array $P$ using dynamic programming based on Theorem \ref{thm_partition}\;
	$S \gets 0$\;
	\For{$i \in I'_e(n)$ } {
		\For{$j \gets 1$ \textbf{to} $\min ({n-1,i-n+1})$ } {
			$S \gets S+P[i-j-n+1][j-1][n-1][n-j-1]$\;
		}
	}
	\Return{$S$}\;
	\caption{Algorithm to compute $|D_c(n)|$ directly. The index set $I'_e(n)=\{N|2(n-1)\le N\le n(n-1), N \mbox{ is an even integer}\}$.}
	\label{algo:potentially_connected}
\end{algorithm}

\begin{algorithm}[h]
	\DontPrintSemicolon 
	\KwIn{A positive integer $n$}
	\KwOut{$|D_c(n)|$}
	$N \gets 2(n-2)$\;
	Allocate a four dimensional array $P[N-n+1][n-1][n][N]$\;
	Fill in the array $P$ using dynamic programming based on Theorem \ref{thm_partition}\;
	$S \gets 0$\;
	\For{$i \in I_e(n)-I'_e(n)$ } {
		\For{$j \gets 1$ \textbf{to} $\min ({n-1,i-n+1})$ } {
			$S \gets S+P[i-j-n+1][j-1][n-1][n-j-1]$\;
		}
	}
	\Return{$|D(n)|-S$}\;
	\caption{Algorithm to compute $|D_c(n)|$ indirectly, assuming $|D(n)|$ is known. The index set
		$I_e(n)-I'_e(n)=\{N|n\le N< 2(n-1), N \mbox{ is an even integer}\}$.}
	\label{algo:forcibly_disconnected}
\end{algorithm}

The OEIS entry \textcolor{red}{\href{https://oeis.org/A007721}{A007721}} records the values of $|D_c(n)|$. 

\section{Algorithm to compute the number of degree sequences of biconnected graphs on $n$ vertices}
\label{sec:count_biconnected}
In this section we will derive Algorithm \ref{algo:potentially_biconnected}
to count the set $D_b(n)$ of degree sequences of biconnected graphs on $n$ vertices.
A degree sequence $d_1\ge d_2 \ge \cdots \ge d_n$ in $D(n)$ is potentially biconnected if and only if
$\sum_{i=1}^{n}d_i\ge 2n-4+2d_1$ and $d_n\ge 2$ \cite{WangKleitman1973}. Since this characterization
expresses the lower bound of the degree sum as a quantity that can vary due to $d_1$, there is not a simple
algorithm like Algorithm \ref{algo:potentially_connected} or Algorithm \ref{algo:forcibly_disconnected} to count $|D_b(n)|$.
However, we will show that $|D_b(n)|$ can still be computed eventually through the $|P(*,*,*,*)|$ values
and the unrestricted partition function $|P(*)|$ values.

Define $S(n)$ to be the set of degree sequences of length $n$ with largest part exactly $n-2$, $C(n)$ the set of degree
sequences of length $n$ with smallest part exactly $1$ and $B(n)$ the set
of degree sequences of length $n$ with largest part exactly $n-1$ and smallest part exactly $1$. Then, $D_2(n)=D(n)-C(n)$
is the set of degree sequences of length $n$ with smallest part at least 2. We next establish some connections
between these sets and show that $|D_2(n)|$ can be efficiently computed.

\begin{proposition}
	\label{thm:deg_seq_second_and1} $|C(n)|=|B(n)|+|S(n)|$,$|B(n)|=|D_0(n-2)|$, for $n\ge 3$.
\end{proposition}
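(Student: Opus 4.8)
The plan is to prove each of the two identities by an explicit bijection, reusing tools already developed in the excerpt. For the first identity $|C(n)|=|B(n)|+|S(n)|$, the key device is the complementation map $\phi\colon d=(d_1\ge\cdots\ge d_n)\mapsto(n-1-d_n,\ldots,n-1-d_1)$ that already appears in Propositions~\ref{thm_lower_complementary} and~\ref{thm_higher_complementary}; it sends a graphical sequence to a graphical one (the complement graph realizes the complemented degrees) and interchanges the roles of the largest and smallest parts. Since $B(n)\subseteq C(n)$, it suffices to biject $C(n)\setminus B(n)$ — the length-$n$ sequences with smallest part exactly $1$ and largest part at most $n-2$ — with $S(n)$. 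I would verify that $\phi$ does exactly this: a smallest part equal to $1$ is sent to a largest part equal to $n-1-1=n-2$, while a largest part at most $n-2$ forces the complement's smallest part $n-1-d_1\ge 1$, so $\phi(d)$ is again zero-free and lies in $S(n)$; conversely any $\sigma\in S(n)$ has $\phi(\sigma)$ with smallest part exactly $1$ and largest part at most $n-2$, hence $\phi(\sigma)\in C(n)\setminus B(n)$. As $\phi$ is an involution this is a bijection, giving $|C(n)\setminus B(n)|=|S(n)|$ and therefore $|C(n)|=|B(n)|+|S(n)|$.

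For the second identity $|B(n)|=|D_0(n-2)|$, I would build on the bijection of Proposition~\ref{thm_deg_seq_high_low} between $H(n)$ (largest part exactly $n-1$) and $D_0(n-1)$, which deletes the universal vertex of degree $n-1$ and thereby lowers every remaining degree by $1$. Because $B(n)\subseteq H(n)$ is singled out by the extra requirement that the smallest part equal exactly $1$, and deleting the universal vertex turns a smallest part of $1$ into a smallest part of $0$, the image of $B(n)$ under this bijection is precisely the set of sequences in $D_0(n-1)$ having at least one zero, namely $D_0(n-1)\setminus D(n-1)$. I would then finish by a pure counting step: Proposition~\ref{thm_deg_seq} gives $|D_0(n-1)|=|D_0(n-2)|+|D(n-1)|$, so $|D_0(n-1)\setminus D(n-1)|=|D_0(n-2)|$, and hence $|B(n)|=|D_0(n-2)|$.

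The degree bookkeeping is routine; the one place needing genuine care, and the main obstacle, is confirming that $\phi$ lands exactly in $S(n)$ and is onto it. One must check that excluding $B(n)$ (largest part $n-1$) is exactly what keeps the complement zero-free, and that the image condition "largest part exactly $n-2$" is forced by "smallest part exactly $1$" in the domain rather than being merely an upper bound; matching these inequalities up precisely is the crux. A secondary point is to make sure every map is well defined on unordered sequences and preserves graphicality, which follows from the standard facts that complements realize complemented degree sequences and that deleting a universal (resp. isolated) vertex preserves realizability.
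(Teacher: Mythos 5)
Your proof is correct. For the first identity it is essentially identical to the paper's: the paper also splits $C(n)$ into $B(n)$ and the set $W(n)$ of sequences with smallest part exactly $1$ and largest part less than $n-1$, and then applies the same complementation map $d\mapsto(n-1-d_n,\ldots,n-1-d_1)$ to biject $W(n)$ with $S(n)$; your careful matching of the inequalities (smallest part $1$ forces largest complemented part exactly $n-2$, largest part at most $n-2$ keeps the complement zero-free, and zero-freeness of $\sigma\in S(n)$ keeps $\phi(\sigma)$ out of $B(n)$) is exactly the content of that step. For the second identity you take a mildly different route: the paper constructs a direct two-step bijection $B(n)\leftrightarrow D_0(n-2)$ (delete the universal vertex, then delete the isolated vertex that the degree-$1$ vertex becomes), whereas you restrict the already-proved bijection $H(n)\leftrightarrow D_0(n-1)$ of Proposition~\ref{thm_deg_seq_high_low} to $B(n)$, identify its image as $D_0(n-1)\setminus D(n-1)$, and finish with the recurrence of Proposition~\ref{thm_deg_seq}. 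The two arguments are equivalent in substance --- your counting step is just the paper's second vertex deletion in disguise --- but yours buys economy by reusing two established results instead of building a new bijection, at the cost of being slightly less self-contained and less explicit about the underlying correspondence between $B(n)$ and $D_0(n-2)$.
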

\begin{proof}
	Notice that $C(n)$ is the disjoint union of $B(n)$ and the set $W(n)$ of degree sequences of length $n$ with largest part
	less than $n-1$ and smallest part exactly 1, so we have $|C(n)|=|B(n)|+|W(n)|$. Each degree sequence 
	$d_1\ge d_2 \ge \ldots \ge d_n$ in $W(n)$
	can be transformed into a degree sequence $n-1-d_n \ge n-1-d_{n-1} \ge \ldots \ge n-1-d_1$, which is a
	degree sequence in $S(n)$ since $d_1<n-1$ and $d_n=1$. Conversely, each degree sequence
in $S(n)$ can be transformed into one in $W(n)$ in a similar way. This bijection
	establishes that $|W(n)|=|S(n)|$, and then the first equation follows.
	
	Each simple undirected graph $G$ with $n$ vertices having a vertex of degree $n-1$ and a vertex of degree 1
	can be transformed into
	a graph $G_2$ of $n-2$ vertices (possibly with isolated vertices) by first removing the vertex of degree $n-1$, resulting
	in an isolated vertex (previously with degree 1), and then removing the newly appeared isolated vertex. Conversely,
	each graph $G_2$ of $n-2$ vertices (possibly with isolated vertices) can be transformed to a graph $G$ with $n$
	vertices having a vertex of degree $n-1$ and a vertex of degree 1 by first adding a new isolated vertex and then
	adding a second vertex and making it adjacent to all the other $n-1$ vertices. This bijection establishes that
	$|B(n)|=|D_0(n-2)|$.
	
\end{proof}

Now $|B(n)|$ can be efficiently computed based on Proposition \ref{thm:deg_seq_second_and1} and Proposition \ref{thm_deg_seq}.
As noted at the end of Section \ref{subsec:BasicAlgo}, $|S(n)|$ can be efficiently computed. Each
degree sequence in $S(n)$ must have sum $N$ such that $2n-3\le N\le n(n-2)$. Define an index set
$I''_e(n)=\{N|2n-3\le N\le n(n-2), N \mbox{ is an even integer}\}$. Clearly,
\[S(n)=\bigcup_{N\in I''_e(n)}G^{'}(N,n-2,n).\]	
By disjointness, we have
\begin{equation} \label{eqn:second_largest}
|S(n)|=\sum_{N\in I''_e(n)}|G^{'}(N,n-2,n)|=\sum_{N\in I''_e(n)}|P(N-2n+3,n-3,n-1,1)|.
\end{equation}
Thus, $|C(n)|$ can be efficiently computed based on Proposition \ref{thm:deg_seq_second_and1} and we can therefore
efficiently compute $|D_2(n)|$ as
\begin{equation} \label{eqn:D2nfirst}
|D_2(n)|=|D(n)|-|C(n)|.
\end{equation}
Next we will show that the combinatorial trick
of expressing $|D_2(n)|$ in a different way allows us to compute $|D_b(n)|$ efficiently.

Recall that $D_b(n)$ is the set of degree sequences of biconnected graphs on $n$ vertices. Clearly $D_b(n)\subseteq D_2(n)$
based on the characterization of potentially biconnected degree sequences \cite{WangKleitman1973}.
Denote $D_{2\setminus b}(n)=D_2(n)-D_b(n)$, i.e., the set of those degree sequences $d_1\ge d_2\ge \cdots \ge d_n$ such that
$\sum_{i=1}^{n}d_i< 2n-4+2d_1$ and $d_n\ge 2$. Clearly, we can express $|D_2(n)|$ in a second way as
\begin{equation} \label{eqn:D2nsecond}
|D_2(n)|=|D_b(n)|+|D_{2\setminus b}(n)|.
\end{equation}
This fulfills our goal of efficiently computing $|D_b(n)|=|D_2(n)|-|D_{2\setminus b}(n)|$
as long as $|D_{2\setminus b}(n)|$ can be computed efficiently, which will be shown next.

Consider the possible degree sequences $d_1\ge d_2\ge \cdots \ge d_n$ in $D_{2\setminus b}(n)$.
Since $D_{2\setminus b}(n) \subseteq D_2(n)$, $d_1\ge d_n \ge 2$. The possible choices for $d_1$ are $d_1=2,\cdots,n-1$
and they are considered individually below.

\begin{enumerate}
	\item $d_1=2:$ It is required that $\sum_{i=1}^{n}d_i< 2n$. This is impossible since we also need $\sum_{i=1}^{n}d_i\ge 2(n-1)+2=2n$.
	\item $d_1=3:$ It is required that $\sum_{i=1}^{n}d_i< 2n+2$. Again impossible since we also need $\sum_{i=1}^{n}d_i\ge 2(n-1)+3=2n+1$.

\item $d_1=2k\ge 4:$ It is required that $\sum_{i=1}^{n}d_i< 2n+4k-4$. We also need $\sum_{i=1}^{n}d_i\ge 2(n-1)+2k=2n+2k-2$. Thus
$\sum_{i=1}^{n}d_i=2n+2k-2,2n+2k,\ldots,2n+4k-6$ are all the feasible degree sums. For each feasible degree sum $2n+2k-2+N$,
$N=0,2,\cdots,2k-4$, consider the possible degree sequences $d_1\ge d_2\ge \cdots \ge d_n$ such that $d_1=2k$, $d_n\ge 2$, $\sum_{i=1}^{n}d_i=2n+2k-2+N$.
Clearly the number of possible degree sequences is the number of ways to partition $N$ into $t\le n-1$ parts
$N=N_1+N_2+\cdots+N_t$ such that
$d_1\ge 2+N_1\ge 2+N_2\ge \cdots \ge 2+N_t \ge 2\ge \cdots \ge 2$ is a graphical degree sequence.
The particular case $t=1$ gives the integer sequence $d_1\ge 2+N\ge 2\ge \cdots \ge 2$.
It is easy to check based on the Nash-Williams condition \cite{Ruch1979,Rousseau1995,Sierksma1991}
that for each $N=0,2,\cdots,2k-4$, the integer sequence $d_1\ge 2+N\ge 2\ge \cdots \ge 2$ is graphical.
For each fixed $N\in \{0,2,\cdots,2k-4\}$ and any partition $N=N_1+N_2+\cdots+N_t$, the integer sequence
$l_1=(d_1, 2+N, 2, \cdots ,2)$ majorizes the integer sequence $l_2=(d_1, 2+N_1, 2+N_2, \cdots ,2+N_t, 2, \cdots ,2)$ and
since $l_1$ is graphical, $l_2$ is also graphical \cite[Corollary 3.1.4]{MahadevPeled1995}.
This produces a total of $|P(0)|+|P(2)|+|P(4)|+\cdots+|P(2k-4)|$ possible degree sequences.

	\item $d_1=2k+1\ge 5:$ It is required that $\sum_{i=1}^{n}d_i< 2n+4k-2$. We also need $\sum_{i=1}^{n}d_i\ge 2(n-1)+2k+1=2n+2k-1$. Thus
	$\sum_{i=1}^{n}d_i=2n+2k,2n+2k+2,\ldots,2n+4k-4$ are all the feasible degree sums.
	For each feasible degree sum $2n+2k-1+N$,
	$N=1,3,\cdots,2k-3$, consider the possible degree sequences $d_1\ge d_2\ge \cdots \ge d_n$ such that $d_1=2k+1$, $d_n\ge 2$, $\sum_{i=1}^{n}d_i=2n+2k-1+N$.
	Clearly the number of possible degree sequences is the number of ways to partition $N$ into $t\le n-1$ parts
	$N=N_1+N_2+\cdots+N_t$ such that
	$d_1\ge 2+N_1\ge 2+N_2\ge \cdots \ge 2+N_t \ge 2\ge \cdots \ge 2$ is a graphical degree sequence.
	The particular case $t=1$ gives the integer sequence $d_1\ge 2+N\ge 2\ge \cdots \ge 2$.
	It is easy to check based on the Nash-Williams condition \cite{Ruch1979,Rousseau1995,Sierksma1991}
	that for each $N=1,3,\cdots,2k-3$, the integer sequence $d_1\ge 2+N\ge 2\ge \cdots \ge 2$ is graphical.
	For each fixed $N\in \{1,3,\cdots,2k-3\}$ and any partition $N=N_1+N_2+\cdots+N_t$, the integer sequence
	$l_1=(d_1, 2+N, 2, \cdots ,2)$ majorizes the integer sequence $l_2=(d_1, 2+N_1, 2+N_2, \cdots ,2+N_t, 2, \cdots ,2)$ and
	since $l_1$ is graphical, $l_2$ is also graphical \cite[Corollary 3.1.4]{MahadevPeled1995}.
	This produces a total of $|P(1)|+|P(3)|+|P(5)|+\cdots+|P(2k-3)|$ possible degree sequences.
	
\end{enumerate}

Based on the above discussions and techniques, we present Algorithm \ref{algo:potentially_biconnected} to
compute the number of potentially biconnected degree sequences of length $n$. The variable $S$ is supposed
to store the value $|D_{2\setminus b}(n)|$. The OEIS entry \textcolor{red}{\href{https://oeis.org/A007722}{A007722}} records the $|D_b(n)|$ values.
\begin{algorithm}
	\DontPrintSemicolon 
	\KwIn{A positive integer $n\ge 5$}
	\KwOut{$|D_b(n)|$}
	Compute $|S(n)|$ using dynamic programming based on Equation (\ref{eqn:second_largest})\;
	Compute $|D_0(n-2)|$ based on Proposition \ref{thm_deg_seq} using the saved $|D(i)|$ values\;
	Compute $|C(n)|=|D_0(n-2)|+|S(n)|$ based on Proposition \ref{thm:deg_seq_second_and1}\;
	Compute $|D_2(n)|=|D(n)|-|C(n)|$ using the saved $|D(n)|$ value\;
	$S \gets 0$\;
	\For{$i \gets 4$ \textbf{to} $n-1$} {
		$k \gets \lfloor \frac{i}{2} \rfloor$\;
		\uIf{$i$ is even}{
			\For{$j \gets 0$ \textbf{to} $2k-4$ \textbf{step} $2$ } {
				$S \gets S+|P(j)|$\;
			}
		}
		\Else{
			\For{$j \gets 1$ \textbf{to} $2k-3$ \textbf{step} $2$ } {
				$S \gets S+|P(j)|$\;
			}
		}
	}
	\Return{$|D_2(n)|-S$}\;
	\caption{Algorithm to compute $|D_b(n)|$, assuming all $|D(i)|$ such that $i\le n$ have been computed.}
	\label{algo:potentially_biconnected}
\end{algorithm}


Before we conclude this section, we briefly mention that the method in this section can be extended to count
the number of degree sequences of triconnected graphs and beyond. Characterization of potentially $k$-connected
degree sequences of length $n$ can be found in Wang and Cleitman \cite{WangKleitman1973}. In order to use
this characterization, we can still count the set $D_k(n)$ of degree sequences of length $n$ with smallest
part at least $k$ in two different ways. Let $C_k(n)$ denote the set of degree sequences of length $n$ with smallest
part exactly $k$. Clearly $|D_k(n)|=|D(n)|-\sum_{i=1}^{k-1}|C_i(n)|$.
The first way expresses $|C_i(n)|$ as the sum of the number of degree
sequences with fixed largest part $n-i-1$ and lengths from $n-i$ to $n$ so that $|D_k(n)|$ can be efficiently computed.
The second way expresses $|D_k(n)|$
as the sum of the number $|D_{kc}(n)|$ of potentially $k$-connected degree sequences and the rest $|R(n)|$. Then
$|D_{kc}(n)|$ can be efficiently computed by combining the results of computing $|D_k(n)|$ in the two ways. Counting $R(n)$ in $D_k(n)$
involves an enumeration of all possible values of $d_1,\cdots, d_{k-1}$ and will become the most tedious
part of the algorithm.

\section{Complexity Analysis}
\label{sec:Analysis}
As mentioned in Section \ref{sec:Intro}, the previous best known methods to calculate $|D(n)|$ have a time
complexity of $\Omega(4^n/n)$ as in \cite{Ruskey1994} and a time complexity of $O(\sqrt{n} 4^n)$ and space
complexity of $O(n)$ as in \cite{Ivanyi2013} respectively. Their exponential running time comes from the fact
that they compute $|D(n)|$ by generating all the sequences in $D(n)$. The variants of these algorithms to compute
$|D_c(n)|$ and $|D_b(n)|$ by incorporating the relevant tests whether the degree sequence is potentially connected
or potentially biconnected have similar exponential time
complexity since the tests only add linear extra time for each generated degree sequence. Thus it is infeasible to
calculate $|D(n)|$, $|D_c(n)|$ and $|D_b(n)|$ using these methods for $n$ beyond 35. In this section we analyze the computational
complexity of the presented algorithms. A summary of the results is presented in Table \ref{tbl:complexity}.

\begin{table}[!htb]
	\centering
	\caption{Complexity of the algorithms}
	\begin{tabular}{||c|c|c||}
		\hline\hline
		Algorithm & Time & Space\\
		\hline\hline
		Ruskey \textit{et al.}\cite{Ruskey1994} & $\Omega(4^n/n)$ & $O(n)$\\
		\hline
		Iv\'{a}nyi \textit{et al.}\cite{Ivanyi2013} & $\Theta(\sqrt{n} 4^n)$ & $O(n)$\\
		\hline
		Algorithm \ref{algo:basic} & $O(n^5)$ & $O(n^5)$ \\
		\hline
		Algorithm \ref{algo:improved} & $O(n^5)$ & $O(n^5)$ \\
		\hline
		Algorithm \ref{algo:potentially_connected} & $O(n^5)$ & $O(n^5)$ \\
		\hline
		Algorithm \ref{algo:forcibly_disconnected} & $O(n^3)$ & $O(n^3)$ \\
		\hline
		Algorithm \ref{algo:potentially_biconnected} & $O(n^5)$ & $O(n^5)$ \\
		\hline\hline
	\end{tabular}
	\label{tbl:complexity}
\end{table}

We first analyze the complexity of the basic Algorithm \ref{algo:basic}. The first and fourth dimensions of
the array $P$ have sizes $O(n^2)$, the second dimension $O(n)$ and the third dimension a constant as mentioned above.
Thus, the space complexity is $O(n^5)$. The dynamic programming step in line 3 costs time $O(n^5)$ as a result.
The loop from lines 5 to 7 costs time $O(n^2 n)=O(n^3)$, which is dominated by the step in line 3. So Algorithm
\ref{algo:basic} has time complexity $O(n^5)$.

Similar analysis shows that the improved Algorithm \ref{algo:improved} for $|D(n)|$
and Algorithm \ref{algo:potentially_connected} for $|D_c(n)|$ also have the asymptotic time and space
complexity $O(n^5)$. For Algorithm \ref{algo:improved}, since the first and fourth dimensions of the array $P$
have sizes about half of those
in the basic Algorithm \ref{algo:basic}, it achieves an improvement of a constant factor of approximately 4.
Although it is not asymptotically faster than the basic algorithm, it does allow us to compute more $|D(n)|$
values since for large values of $n$ the memory issue is more severe than the time issue for these algorithms.
Algorithm \ref{algo:forcibly_disconnected} has time and space complexity $O(n^3)$ due to the reduction
of the sizes of the first and fourth dimension of the array $P$ to $O(n)$.

For Algorithm \ref{algo:potentially_biconnected}, line 1 takes $O(n^5)$ time and $O(n^5)$ space. Lines 2 to 4
take time $O(n)$ using saved values of $|D(i)|,i\le n$. For lines 5 to 13, all $|P(j)|,j\le n-5$ can be precomputed
in time $O(n^{3/2+\epsilon})$ since $|P(n)|$ can be computed in time $O(n^{1/2+\epsilon})$ \cite{Johansson2012}. The double for loop
from lines 6 to 13 takes time $O(n^2)$. The overall time and space complexity are both $O(n^5)$.

\section{Computational Experiences}
\label{sec:experiments}

We note that our calculation of $|D(30)|$ using an optimized C implementation of the basic Algorithm \ref{algo:basic}
takes a single Intel Core i7 computer with 16 GB memory 1.8 seconds. The improved Algorithm \ref{algo:improved} uses 0.7 seconds
to get the same result. In contrast, the previous result of \cite{Ivanyi2013} estimated that computing $|D(30)|$
using their method would cost a single computer about 72 years. The computation of $|D(118)|$ using the improved
Algorithm \ref{algo:improved} on a computer with 256 GB memory takes about 2.5 hours.

For $n<30$ our results of $|D(n)|$ agree with those in \cite{Ivanyi2013} while the values of $|D(30)|$ and $|D(31)|$ we calculated are
slightly different. From the available numerical results it appears that $|D(n)|/|D(n-1)|$
is slowly increasing when $n\ge 9$ and approaching 4, giving more supporting evidence of Royle's conjecture.
We also computed the $|D_0(n)|$, $|D_c(n)|$ and $|D_b(n)|$ values and incorporated all available numerical
results into their respective OEIS entries.

During the preparation of this manuscript, the author became aware of the results in \cite{KohnertSlides}, which
calculated the values of $|D(n)|$ up to $n=34$. The method employed therein seemed more complicated and not formally
published to the knowledge of the author.

\section{Conclusions}
\label{sec:conclusion}
In this paper we presented dynamic programming algorithms to compute the number of degree sequences
of simple graphs, simple connected graphs and simple biconnected graphs on $n$ vertices exactly all based on a known recurrence
of Barnes and Savage. They are the first known polynomial time algorithms to compute
these functions and are asymptotically much faster than previous best known counterparts. Using these
fast algorithms we extend the known exact values of $|D(n)|$,  $|D_0(n)|$, $|D_c(n)|$ and $|D_b(n)|$ up to $n=118$ and these
results have been included in their OEIS entries.
Further research include how to determine the tight asymptotic order of these functions, how to efficiently compute
the number of forcibly $k$-connected degree sequences of length $n$ and determine its asymptotic order.

\section{Acknowledgements}
This research has been supported by the Startup Fund of Georgia Southern University.

\sloppy

\bibliographystyle{plain}
\bibliography{efficientDScounting}







\end{document}